\newtheorem{theorem}{Theorem}[section]
\newtheorem{corollary}{Corollary}[section]
\newtheorem{lemma}{Lemma}[section]
\numberwithin{equation}{section}
\begin{document}
\markboth{}{}

\title[A note on the sharp $L^p$-Convergence rate of Upcrossings to the Brownian Local-Time] {A Note on the sharp $L^p$-Covergence rate of Upcrossings to the Brownian local time}



\author{Alberto Ohashi}

\address{Departamento de Matem\'atica, Universidade Federal da Para\'iba, 13560-970, Jo\~ao Pessoa - Para\'iba, Brazil}\email{alberto.ohashi@pq.cnpq.br; ohashi@mat.ufpb.br}

\author{Alexandre B. Simas}

\address{Departamento de Matem\'atica, Universidade Federal da Para\'iba, 13560-970, Jo\~ao Pessoa - Para\'iba, Brazil}\email{alexandre@mat.ufpb.br}

\thanks{We would like to thank professor Davar Koshnevisan for very helpful discussions about this article.}
\date{\today}

\keywords{Brownian motion, Local-Time, upcrossings} \subjclass{}

\begin{center}

\end{center}

\begin{abstract}
In this note, we prove a sharp $L^p$-rate of convergence of the number of upcrossings to the local time of the Brownian motion. In particular, it provides novel $p$-variation estimates ($2 < p < \infty$) for the number of upcrossings of the Brownian motion. Our result complements the fundamental work of Koshnevisan~\cite{kho} who obtains an almost sure exact rate of convergence in the sup norm.
\end{abstract}

\maketitle

\section{Introduction}
Fix $x\in \mathbb{R}$, and let $(\Omega, \mathbb{F}, \mathbb{P}^x)$ be the Wiener space of the canonical Brownian motion starting from $x$, i.e., $\Omega:= C([0,+\infty);\mathbb{R})$ is the space of
continuous functions from $[0,+\infty)$ to $\mathbb{R}$, $\mathbb{F} = (\mathcal{F}_t)_{t\ge 0}$ is the natural filtration generated by the Brownian motion satisfying the usual conditions, and $\mathbb{P}^x$ is the Wiener measure of the Brownian motion $B(\omega,t) = \omega(t)$ with $\mathbb{P}^x\{B(0)=x\}=1$. When $x=0$, we just write $\mathbb{P}$. We endow the Wiener space with the filtration generated by the Brownian motion satisfying the usual conditions.

The occupation measure of $B$ up to the instant $t$ is the measure $\mu_t$ defined by the relation

$$\mu_t(A): = \int_0^t1\!\!1_A(B(s))ds;~A\in \mathcal{B}(\mathbb{R}),$$
where $\mathcal{B}(\mathbb{R})$ is the Borel sigma algebra of $\mathbb{R}$. In a landmark work, L\'evy established that for almost all trajectories of the Brownian motion and for any $t$, the random measure $\mu_t$ has a density and by the classical Trotter Theorem we know that it admits a jointly continuous version $\{\ell^x(t);(x,t)\in \mathbb{R}\times [0,\infty)\} $, the so-called local-time of the Brownian motion.

Several approximation schemes exist in the literature for the Brownian local time. For instance, let $\{\psi_\varepsilon(\cdot); \varepsilon >0\}$ be an approximation to the identity. Then $\lim_{\varepsilon\rightarrow 0}\int_0^t \psi_\varepsilon(B(s) - x)ds = \ell^x(t)~a.s$ uniformly over all $x\in \mathbb{R}$. See e.g~Borodin~\cite{borodin} and Karatzas and Shreve~\cite{karatzas} for further references on this topic. In a different direction, very appealing strong approximation schemes can be constructed from several types of random walks based on the same probability space. See Bass and Koshnevisan~\cite{bass1} for further references. The L\'evy excursion theory provides other approximations schemes by means of the number of upcrossings or of the excursions before a given time.

Sharp rates of almost sure convergence of the number of upcrossings to the Brownian local time in the sup norm and mean squared error are by now well understood. See the fundamental works of Borodin~\cite{borodin}, Koshnevisan~\cite{kho} and Knight~\cite{knight1}. However, much little is known about $L^p(\mathbb{P})$-convergence rates for upcrossings to the Brownian local-time (see e.g~Blandine and Vallois~\cite{vallois} for $L^p$-rates in the sense of regularization). The goal of this short note is to present sharp rates of convergence for the number upcrossings to the Brownian local-time in the $L^p(\mathbb{P})$-sense.

Our main motivation in studying sharp $L^p(\mathbb{P})$-rates of convergence of the number of upcrossings to the Brownian local time lies in It\^o formulas for path-dependent functionals of the Brownian motion. Le\~ao, Ohashi and Simas~\cite{LOS} have recently proved that under suitable $p$-variation regularity~(in the sense of rough path, see e.g~\cite{friz}) of a non-anticipative Brownian functional $F_t:C([0,t]; \mathbb{R})\rightarrow \mathbb{R}$, the process

\begin{equation}\label{pathdec}
F_t(B_t) - \frac{1}{2}\int_0^t\int_{-\infty}^{+\infty} \partial_x F_s(\textbf{t}(B_s,x))d_{(s,x)}\ell^{x}(s),~0\le t\le T,
\end{equation}
is a Brownian semimartingale, where $B_t: = \{B(s); 0\le s\le t\}$  is the Brownian path and $0 < T < \infty$ is a fixed terminal time. Here the $d_{(s,x)}\ell$-integral is the pathwise 2D Young integral (see e.g~\cite{young1,friz,OS}) composed with the Brownian local-time $\{\ell^x(s); 0\le s\le T, x\in\mathbb{R}\}$ and, roughly speaking, a suitable ``space" derivative of $F$ composed with a ``terminal value modification" $\textbf{t}(B_t,x)$ of the Brownian paths (see~\cite{LOS} for further details).

One important step in the proof of~(\ref{pathdec}) is a sharp $L^p(\mathbb{P})$-convergence rate of the number of upcrossings of the embedded random walk introduced by Knight~\cite{knight} to the Brownian local time. In particular, $p$-variation regularity of the number of upcrossings plays a key role on the existence of the semimartingale decomposition~(\ref{pathdec}) and it is an almost immediate corollary of the main result of this note.

\section{Preliminaries}
At first, let us recall the F. Knight~\cite{knight} construction of an $2^{-k}\mathbb{Z}$-valued simple symmetric random walk using a single Brownian motion.
For a fixed positive integer $k$, we define $T^{k}_0 := 0$ a.s. and

$$
T^{k}_n := \inf\{T^{k}_{n-1}< t <\infty;  |B(t) - B(T^{k}_{n-1})| = 2^{-k}\}, \quad n \ge 1.
$$
Then the discrete-time process $R^k:=\{B(T^k_n); n\ge 0 \}$ is a simple symmetric random walk. For simplicity of exposition, we are going to imbed $R^k$ into a continuous-time process. We define $A^k$ as follows

\[
A^{k} (t) := \sum_{n=1}^{\infty}2^{-k}\eta^{k}_n1\!\!1_{\{T^{k}_n\leq t \}};~t\ge0,
\]
where

$$
\eta^{k}_n:=\left\{
\begin{array}{rl}
1; & \hbox{if} \ B (T^{k}_n) - B (T^{k}_{n-1}) = 2^{-k} ~ ~ \mbox{and} ~ ~ T^{k}_n < \infty \\
-1;& \hbox{if} \ B (T^{k}_n) - B (T^{k}_{n-1}) = -2^{-k} ~ ~ \mbox{and}~ ~ T^{k}_n < \infty \\
0; & \hbox{if} \ T^{k}_n = \infty,
\end{array}
\right.
$$
for $k,n\ge 1$. Then, for each $k\ge 1$, $A^k$ is a bounded variation martingale w.r.t its natural filtration $\mathbb{F}^k$. See~e.g~\cite{LEAO_OHASHI2013} for details. In the sequel, for a given $x\in \mathbb{R}$, let $j_k(x)$ be the unique integer such that $(j_k(x)-1)2^{-k} < x \le j_k(x)2^{-k}$. Let us define

$$u(j_k(x)2^{-k},k,t):= \#\ \Big\{n \in \{0, \ldots, N^k(t)-1\}; A^k(T^k_{n}) =(j_k(x)-1)2^{-k}, A^k(T^k_{n+1}) =j_k(x)2^{-k}\Big\};$$
for~$x\in \mathbb{R}, k\ge 1, 0\le t < \infty.$ Here, $N^k(t): = \max \{n; T^k_n \le t\}$ is the length of the embedded random walk until time $t$. By the very definition, $u(j_k(x)2^{-k},k,t) :=$ number of upcrossings of~$A^k$~from~$(j_k(x)-1)2^{-k}$~to~$j_k(x)2^{-k}$~before time~$t$. In the sequel, we are going to denote

$$U^k(t,x):=22^{-k}u(j_k(x)2^{-k},k,t); (t,x)\in [0,+\infty)\times \mathbb{R}.$$
One fundamental result due to Koshnevisan~(see Th. 1.4 and Remark~1.7.1 in~\cite{kho}) provides the exact rate of almost sure convergence of the number of upcrossings to the Brownian local time as follows:

\begin{theorem}\label{koshrr}
For every finite positive constant $M>0$,
\begin{equation}\label{koshr}
\lim_{k\rightarrow \infty}\sup_{0\le t\le M}\Bigg|\sup_{x\in \mathbb{R}} \frac{|U^k(t,x) - \ell^x(t)|}{\sqrt{2^{-k}log(2^k)}} - 2\sqrt{\ell^*(t)}\Bigg| = 0~\text{almost surely}.
\end{equation}
where $\ell^{*}(t):=\sup_{x\in \mathbb{R}}\ell^x(t)$.
\end{theorem}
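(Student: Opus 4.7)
The plan is to identify $U^k(t,x)$ with a Lévy–Trotter type approximation of $\ell^x(t)$ and then establish the sharp rate by combining a central limit theorem at fixed $x$ with extreme-value asymptotics of an (approximately) Gaussian field in the $x$-variable. The first reduction is to note that, by construction, $u(j_k(x)2^{-k},k,t)$ is precisely the number of upcrossings of the dyadic interval $[(j_k(x)-1)2^{-k},j_k(x)2^{-k}]$ by the Brownian path $B$ up to the stopping time $T^k_{N^k(t)}$, with at most a one-step discrepancy on the residual interval $[T^k_{N^k(t)},t]$. The classical Lévy–Trotter identity then already gives $U^k(t,x)\to \ell^x(t)$ a.s. and the problem becomes one of fluctuation analysis for the difference.

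For the sharp asymptotic at fixed $(t,x)$, I would decompose the centered count $u(j_k(x)2^{-k},k,t) - 2^{k-1}\ell^x(t)$ as a sum of conditionally independent excursion increments obtained by applying the strong Markov property at each successive visit of $B$ to the dyadic level $j_k(x)2^{-k}$. Equivalently, the Ray–Knight theorem identifies $x\mapsto \ell^x(\tau_a)$ with a squared Bessel process, from which the correct fluctuation scale is read off via an invariance principle. Both routes yield a martingale CLT of the form
$$U^k(t,x)-\ell^x(t) \;\approx\; c\,\sqrt{\ell^x(t)\,2^{-k}}\;Z,$$
with $Z$ standard Gaussian, together with uniform moment bounds on the remainder.

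The step from a pointwise CLT to the precise prefactor $2\sqrt{\ell^*(t)}$ is where the real work lies. Only $O(2^k)$ dyadic levels $j2^{-k}$ are actually visited by $B$ on $[0,M]$, so $\sup_{x}|U^k(t,x)-\ell^x(t)|$ is essentially a maximum of $2^k$ (weakly dependent) approximately centered Gaussian variables whose pointwise variance is maximised at the level realising $\ell^*(t)$. A Slepian-type comparison together with a careful second-moment / Borel–Cantelli argument then gives an extreme-value asymptotic of the form $\sigma\sqrt{2\log 2^k}$, and the constants $c$, the variance and $\sqrt{2}$ multiply to produce exactly $2\sqrt{\ell^*(t)}$. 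Uniformity in $t\in[0,M]$ is then handled by the monotonicity of $t\mapsto \ell^x(t)$ and $t\mapsto U^k(t,x)$, combined with a chaining over a sufficiently fine dyadic partition of $[0,M]$ and the modulus of continuity of $(t,x)\mapsto \ell^x(t)$.

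The main obstacle is precisely this extraction of the sharp constant $2\sqrt{\ell^*(t)}$: both the lower bound, which requires producing some $x$ whose normalised deviation saturates the bound infinitely often, and the matching upper bound, which must rule out atypical fluctuations over all $x$ and $t$ simultaneously, rely on delicate Gaussian tail estimates coupled with the Ray–Knight geometry of the local-time field. In particular, losing nothing beyond the explicit $\sqrt{\log 2^k}$ factor when taking the sup in $x$ and sup in $t$ is where Koshnevisan's detailed chaining argument is unavoidable, and this is the heart of the proof.
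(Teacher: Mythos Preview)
The paper does not prove this statement at all: Theorem~\ref{koshrr} is quoted verbatim as a known result of Koshnevisan (Theorem~1.4 and Remark~1.7.1 in~\cite{kho}) and is used only as an input to the proof of Theorem~\ref{lprate}. There is therefore no ``paper's own proof'' to compare your proposal against.

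As for the proposal itself, what you have written is a plausible high-level roadmap of Koshnevisan's argument---excursion/Ray--Knight representation for the pointwise CLT, extreme-value behaviour of an approximately Gaussian field over $O(2^k)$ dyadic levels to extract the $\sqrt{\log 2^k}$ factor and the constant $2\sqrt{\ell^*(t)}$, and chaining plus monotonicity for uniformity in $t$. But it is a sketch, not a proof: you explicitly concede at the end that the matching upper and lower bounds for the sharp constant require ``Koshnevisan's detailed chaining argument'' and that ``this is the heart of the proof.'' In other words, the genuine difficulty---controlling the dependence between the fluctuations at different dyadic levels well enough to get both directions with the exact constant, uniformly in $t$---is identified but not carried out. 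If the goal were to reproduce the result, you would need to actually supply that argument (or cite~\cite{kho}); if the goal is merely to use the result, as the present paper does, then a citation suffices and no proof is needed here.
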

In this article, we are going to show a counterpart of Theorem~\ref{koshrr} in the sense of $L^p(\mathbb{P})$ for $1\le p <\infty$. More precisely, our main result reads as follows.
\begin{theorem}\label{lprate}
Let $ T$ be a finite positive constant. For each $\eta >0$, there exists a finite universal positive constant $C(\eta)$ such that

$$\sup_{k\ge 1}\mathbb{E}\sup_{0\le t\le T}\sup_{x\in \mathbb{R}}\frac{ |U^k(t,x) - \ell^x(t)|^{2+\eta}}{ (2^{-k}log(2^k))^{\frac{2+\eta}{2}} } \le C(\eta)T^{\frac{1}{2} + \frac{\eta}{4}}.$$
\end{theorem}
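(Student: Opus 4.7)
The plan is to refine the almost-sure rate of Theorem~\ref{koshrr} into a uniform $L^{2+\eta}$ bound by proving a sub-Gaussian tail at scale $\sqrt{2^{-k}\log(2^k)}$ and then integrating it, with the correct $T$-dependence extracted from Brownian scaling of $\ell^*(T)$.

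First I would obtain a martingale decomposition of $U^k(t,x) - \ell^x(t)$. Combining the Tanaka--Meyer identity for $\ell^x(t)$ with the discrete Tanaka formula for the local time of the embedded random walk $R^k$ at the grid level $j_k(x)2^{-k}$, and using the uniform bound $|B(t) - A^k(t)| \le 2^{-k}$, one writes
\[
U^k(t,x) - \ell^x(t) = M^k_t(x) + \epsilon^k_t(x),
\]
where $M^k(\cdot, x)$ is an $\mathbb{F}^k$-martingale with jumps of size $O(2^{-k})$ and $\epsilon^k$ is a residual of order $2^{-k}$ coming from the rounding $x \mapsto j_k(x)2^{-k}$ and the mismatch $B(t) \mapsto A^k(t)$. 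The number of jumps of $M^k(x)$ on $[0,T]$ is controlled by the visits of $A^k$ to a neighborhood of $x$, so that $[M^k(x)]_T \le C \cdot 2^{-k} \ell^*(T)$ up to lower-order terms.

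Next I would apply a Freedman--Bennett exponential inequality to $M^k(x)$, conditionally on $\{\ell^*(T)\le L\}$, to obtain
\[
\mathbb{P}\!\left(\sup_{t\le T}|M^k_t(x)| > \lambda,\; \ell^*(T)\le L\right) \le 2\exp\!\left(-\frac{c\lambda^2}{2^{-k}L}\right).
\]
Since $U^k(t,\cdot)$ is piecewise constant on $2^{-k}\mathbb{Z}$ and both $U^k(\cdot, x)$ and $\ell^x(\cdot)$ vanish off $[-S_T, S_T]$ with $S_T := \sup_{s\le T}|B(s)|$, the supremum over $x$ reduces (modulo H\"older regularity of $\ell^x$ in $x$) to a supremum over at most $O(2^k S_T)$ grid points. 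A union bound at $\lambda = \beta\sqrt{2^{-k}\log(2^k)}$ yields a genuine sub-Gaussian tail in $\beta$ as soon as $c\beta^2/L > 1$. Writing
\[
\mathbb{E}\!\left[\sup_{t,x}\frac{|U^k(t,x)-\ell^x(t)|^{2+\eta}}{(2^{-k}\log(2^k))^{(2+\eta)/2}}\right] = (2+\eta)\int_0^\infty \beta^{1+\eta}\,\mathbb{P}(\cdots > \beta)\,d\beta
\]
and removing the truncations on $\ell^*(T)$ and $S_T$ via their well-known Gaussian tails, this integral is bounded by $C(\eta)\,\mathbb{E}[\ell^*(T)^{(2+\eta)/2}]$. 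Brownian scaling $\ell^*(T)\stackrel{d}{=}\sqrt{T}\,\ell^*(1)$ finally gives $\mathbb{E}[\ell^*(T)^{(2+\eta)/2}] = T^{(2+\eta)/4}\,\mathbb{E}[\ell^*(1)^{(2+\eta)/2}] = C\,T^{1/2+\eta/4}$, the claimed $T$-dependence.

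The main obstacle will be the coupling between the second and third steps: to produce a sub-Gaussian tail whose exponent in $\beta^2$ is tight enough that the union bound over the $\sim 2^k$ relevant grid points is exactly absorbed by the $\log(2^k)$ factor in the scale, one must carefully decouple the random quadratic-variation bound $\ell^*(T)$ from $\beta$. This can be handled either by a good-$\lambda$ truncation or by stopping at $\tau_L := \inf\{t: \ell^*(t) > L\}$ before applying the exponential martingale inequality, and then integrating against the Gaussian tail of $\ell^*(T)$ with the sharp $\sqrt{T}$ scaling.
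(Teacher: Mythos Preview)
Your strategy is coherent but takes a genuinely different route from the paper. The paper does \emph{not} build a martingale decomposition of $U^k-\ell^x$ or invoke any exponential martingale inequality; instead it treats Koshnevisan's almost-sure rate (Theorem~\ref{koshrr}) as a black box and upgrades it to $L^{2+\eta}$ through the good-$\lambda$/Burkh\"older machinery. Concretely, the paper introduces the left-continuous increasing functional
\[
F^k(t)=\sup_{0\le s<t\wedge\bar T}\sup_{x\in\mathbb R}\frac{|U^k(s,x)-\ell^x(s)|^2}{2^{-k}\log(2^k)},
\]
and proves two lemmas: (i) $F^k$ is sub-additive under the shift, $F^k(t)-F^k(s)\le F^k(t-s)\circ\theta_s$; and (ii) $\sup_{k\ge1}\sup_{x,\lambda}\mathbb P^x\{F^k(\lambda^2)>b\lambda\}\to0$ as $b\to\infty$, which follows from~(\ref{koshr}) combined with Brownian scaling of $(U^k,\ell)$ (the sup over $x$ makes the $\mathbb P^y$-law independent of the starting point $y$). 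These feed into the standard good-$\lambda$ inequality via the strong Markov property at $J_\lambda=\inf\{t:F^k(t)>\lambda\}$, and Burkh\"older's lemma on the moderate function $x\mapsto x^{1+\eta/2}$ then gives $\mathbb E|F^k(\bar T)|^{1+\eta/2}\le C(\eta)\bar T^{1/2+\eta/4}$ directly, with the dominating functional $t\mapsto t^{1/2}$ supplying the $T$-exponent automatically.

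Your plan, by contrast, would rebuild the sub-Gaussian tail from scratch via a discrete Tanaka formula, Freedman's inequality conditioned on $\{\ell^*(T)\le L\}$, and a union bound over the $O(2^kS_T)$ relevant grid levels --- which is essentially the chaining argument that underlies Koshnevisan's theorem itself rather than an application of it. This is considerably more labor-intensive (the coupling you flag as ``the main obstacle'' is precisely the hard part of~\cite{kho}), but it has the merit of being self-contained and of producing the explicit dominant $C(\eta)\,\mathbb E[\ell^*(T)^{(2+\eta)/2}]$, from which the scaling $\ell^*(T)\stackrel{d}{=}\sqrt T\,\ell^*(1)$ recovers the exponent. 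The paper's argument is much shorter because it outsources all of the exponential-tail work to Theorem~\ref{koshrr} and lets the sub-additivity/good-$\lambda$ mechanism perform the $L^p$ upgrade; note in particular that the paper never needs a quantitative tail bound, only the qualitative uniform tightness of Lemma~\ref{unifscal}.
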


In the sequel, we denote $I_m := [-2^m,2^m]\subset \mathbb{R}$ where $m\ge 1$ is a given positive integer. We recall the notion of $q$-variation of a real-valued function $f:I_m\rightarrow \mathbb{R}$

$$\|f\|^q_{I_m;q}:= \sup_{\Pi}\sum_{x_i\in \Pi}|f(x_i)-f(x_{i-1})|^q;~1\le q< \infty,$$
where $\sup$ is taken over all partitions of the compact set $I_m$ (see~e.g~\cite{friz}). An important result due to Feng and Zao~\cite{feng} states that the Brownian local time has finite $(2+\delta)$-variation in the sense that $\mathbb{E}\sup_{0\le t\le T}\|\ell(t) \|^{2+\delta}_{I_m;2+\delta}< \infty$ for every $\delta>0$. In this article, we show an upper bound for the number upcrossings as follows. An almost immediate corollary of Theorem~\ref{lprate} is the following result:

\begin{corollary}\label{variationr}
For any $\delta>0$ and $m\ge1$, there exists a finite positive constant $C(\delta,m)$ (which only depends on $\delta$ and $m$) such that

$$\sup_{k\ge 1}\mathbb{E}\sup_{0\le t\le T}\|U^k(t)\|^{2+\delta}_{I_m; 2+\delta}\le \mathbb{E}\sup_{0\le t\le T}\|\ell(t)\|^{2+\delta}_{I_m;2+\delta} + C(\delta,m)T^{\frac{1}{2}+\frac{\delta}{4}}.$$
\end{corollary}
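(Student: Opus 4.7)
The plan is to combine the triangle inequality for the $(2+\delta)$-variation semi-norm with the sharp sup-norm estimate of Theorem~\ref{lprate}, exploiting that $U^k(t,\cdot)$ is piecewise constant on the dyadic grid $2^{-k}\mathbb{Z}$. Writing $E^k:=U^k-\ell$ and applying the mean-value inequality $(a+b)^{2+\delta}\le a^{2+\delta}+(2+\delta)\,b\,(a+b)^{1+\delta}$ (valid for $a,b\ge 0$) to each partition increment $|\Delta U^k_i|\le|\Delta\ell_i|+|\Delta E^k_i|$, followed by H\"older with exponents $2+\delta$ and $(2+\delta)/(1+\delta)$, and then a supremum over partitions of $I_m$, I obtain the pathwise bound
\[
\|U^k(t)\|^{2+\delta}_{I_m;2+\delta}\le\|\ell(t)\|^{2+\delta}_{I_m;2+\delta}+C_\delta\,\|E^k(t)\|_{I_m;2+\delta}\bigl(\|\ell(t)\|^{2+\delta}_{I_m;2+\delta}+\|E^k(t)\|^{2+\delta}_{I_m;2+\delta}\bigr)^{\tfrac{1+\delta}{2+\delta}}.
\]

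The technical heart is a uniform bound for $\mathbb E\sup_t\|E^k(t)\|^{2+\delta}_{I_m;2+\delta}$. I would exploit that on each interval $I_j:=((j-1)2^{-k},j2^{-k}]$, $E^k(t,\cdot)$ coincides with $-\ell^{\cdot}(t)$ up to an additive constant, while $E^k(t,\cdot)$ has at most $2^{m+k+2}$ jumps in $I_m$ (at the grid points), each of magnitude at most $|\Delta\ell_j|+2\sup_x|U^k(t,x)-\ell^x(t)|$. A careful $p$-variation decomposition (summing the within-interval continuous contributions and the discrete jump contributions across the grid) produces a pathwise estimate of the form
\[
\|E^k(t)\|^{2+\delta}_{I_m;2+\delta}\le C_\delta\,\|\ell(t)\|^{2+\delta}_{I_m;2+\delta}+C(m)\,2^{m+k}\,\sup_{x\in I_m}|U^k(t,x)-\ell^x(t)|^{2+\delta}.
\]
Applying Theorem~\ref{lprate} with $\eta=\delta$, the second term's $\mathbb E\sup_t$ is bounded by $C(\delta,m)\,T^{1/2+\delta/4}\,2^{-k\delta/2}(\log 2^k)^{(2+\delta)/2}$, which is uniform in $k\ge 1$ since $\delta>0$. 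Combined with a Brownian-scaling refinement of Feng-Zhao (which uses the $\tfrac12$-H\"older regularity of $x\mapsto\ell^x(t)$ together with $\ell^x(t)\stackrel{d}{=}\sqrt t\,\tilde\ell^{x/\sqrt t}(1)$ to give $\mathbb E\sup_t\|\ell(t)\|^{2+\delta}_{I_m;2+\delta}\le C(\delta,m)\,T^{1/2+\delta/4}$), one concludes $\mathbb E\sup_t\|E^k(t)\|^{2+\delta}_{I_m;2+\delta}\le C(\delta,m)\,T^{1/2+\delta/4}$ uniformly in $k$.

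Taking $\mathbb E\sup_t$ in the first display and applying H\"older in $\Omega$ (with the same exponents $2+\delta$ and $(2+\delta)/(1+\delta)$) to the cross term yields a contribution at most $(\mathbb E\sup_t\|E^k\|^{2+\delta})^{1/(2+\delta)}(\mathbb E\sup_t\|\ell\|^{2+\delta}+\mathbb E\sup_t\|E^k\|^{2+\delta})^{(1+\delta)/(2+\delta)}\le C(\delta,m)\,T^{1/2+\delta/4}$, establishing the corollary. The main obstacle is the intermediate pathwise estimate on $\|E^k(t)\|^{2+\delta}_{I_m;2+\delta}$: the multiplicative factor $2^{m+k}$ counting the dyadic grid points in $I_m$ must be exactly absorbed by the $(2^{-k})^{(2+\delta)/2}$ decay provided by Theorem~\ref{lprate}. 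Their product $2^m\cdot 2^{-k\delta/2}$ is uniformly bounded precisely because $2+\delta>2$, which is both the reason the corollary requires $\delta>0$ and the source of the final $T^{1/2+\delta/4}$ rate.
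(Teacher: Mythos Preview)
Your proposal identifies the correct mechanism---the $O(2^{k+m})$ dyadic cells in $I_m$ combined with the $(2^{-k}\log 2^k)^{(2+\delta)/2}$ decay from Theorem~\ref{lprate}---but the route you take introduces a genuine error. The paper's argument is far more direct: for any partition $\Pi$ only the increments with $j_k(x_i)\neq j_k(x_{i-1})$ contribute (at most $2\cdot 2^{k+m}$ of them), and the crude inequality $|a+b+c|^{2+\delta}\le C_\delta(|a|^{2+\delta}+|b|^{2+\delta}+|c|^{2+\delta})$ applied to $\Delta U^k_i=(U^k-\ell)(x_i)+\Delta\ell_i-(U^k-\ell)(x_{i-1})$ immediately yields
\[
\|U^k(t)\|^{2+\delta}_{I_m;2+\delta}\le C_\delta\|\ell(t)\|^{2+\delta}_{I_m;2+\delta}+C_\delta\,2^{k+m+1}\sup_{x}|U^k(t,x)-\ell^x(t)|^{2+\delta},
\]
after which Theorem~\ref{lprate} finishes the job. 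No H\"older step, no separate control of $\|E^k\|_{I_m;2+\delta}$, and no information about the $T$-growth of $\mathbb{E}\sup_t\|\ell(t)\|^{2+\delta}_{I_m;2+\delta}$ is needed.

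The gap in your argument is the ``Brownian-scaling refinement of Feng--Zhao'' giving $\mathbb{E}\sup_{0\le t\le T}\|\ell(t)\|^{2+\delta}_{I_m;2+\delta}\le C(\delta,m)T^{1/2+\delta/4}$. This is false. Joint Brownian scaling gives $\{\ell^x(t)\}_{x,t}\stackrel{d}{=}\{T^{1/2}\tilde\ell^{\,x/\sqrt{T}}(t/T)\}_{x,t}$, hence
\[
\mathbb{E}\sup_{0\le t\le T}\|\ell(t)\|^{2+\delta}_{I_m;2+\delta}=T^{(2+\delta)/2}\,\mathbb{E}\sup_{0\le s\le 1}\|\tilde\ell(s)\|^{2+\delta}_{T^{-1/2}I_m;2+\delta},
\]
so the exponent is $1+\delta/2$, not $1/2+\delta/4$. (The shrinking interval does not compensate: for large $T$ the right-hand side still grows like $T^{1+\delta/2}$.) Since your H\"older cross term is $(\mathbb{E}\sup_t\|E^k\|^{2+\delta})^{1/(2+\delta)}(\mathbb{E}\sup_t\|\ell\|^{2+\delta}+\cdots)^{(1+\delta)/(2+\delta)}$ and your bound on $\mathbb{E}\sup_t\|E^k\|^{2+\delta}$ itself contains $C_\delta\,\mathbb{E}\sup_t\|\ell\|^{2+\delta}$, the incorrect scaling propagates and the claimed $T^{1/2+\delta/4}$ error cannot be recovered. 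The attempt to keep the coefficient on $\|\ell\|^{2+\delta}$ exactly~$1$ via the mean-value inequality is what forces this detour; the paper simply accepts a constant $C_\delta$ in front of the local-time term and thereby avoids the issue entirely.
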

\section{Proof of Theorem~\ref{lprate}}
It is not difficult to see that Theorem~\ref{koshrr} will play a key role in the proof  of Theorem~\ref{koshr}. However, the argument given by Koshnevisan in the proof of~(\ref{koshr}) is fully probabilistic in the sense that it essentially relies on purely Borel-Cantelli's-type arguments with none $L^p$ estimates at hand. So we need to adopt a rather different strategy. Thanks to the deep Burh\"{o}lder's ideas~\cite{burk} on moderate functions, we shall construct an argument towards the proof of Theorem \ref{lprate}. The strategy is the obtention of a so-called good-lambda inequality to get the desired $L^p$-rate of convergence. See Jacka~\cite{jacka} and Bass~\cite{bass} for further details.

The starting point of our analysis is the study of the scaling behavior of the following adapted process

$$\sup_{0\le s\le t} \sup_{x\in \mathbb{R}} \frac{ |U^k(s,x) - \ell^x(s)|^2} { 2^{-k}log(2^k)};~0\le t <\infty.$$
The left-continuous version is given by
$$J^k(t):=\sup_{0\le s < t}\sup_{x\in \mathbb{R}} \frac{ |U^k(s,x) - \ell^x(s)|^2} { 2^{-k}log(2^k)};~0\le t< \infty.$$
Since we are only interested on the bounded set $[0,T]$, we are going to stop $J^k$ as follows

$$F^k(t) : = J^k(t\wedge \bar{T}); 0\le t < \infty$$
where $ T < \bar{T}< \infty$.

\begin{lemma}\label{unifscal}
The following convergence holds
$$\lim_{b\rightarrow \infty} \sup_{k\ge 1}\sup_{x\in \mathbb{R}, \lambda >0} \mathbb{P}^x\Big\{ F^k(\lambda^2) > b\lambda  \Big  \}=0.$$
\end{lemma}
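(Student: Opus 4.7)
The plan is to exploit Brownian scaling to reduce the double supremum over $(k,\lambda)$ to a tightness statement for $J^{\tilde k}(1)$ at a new, rescaled index $\tilde k$, and then to invoke Koshnevisan's Theorem~\ref{koshrr}. By translation invariance of Brownian motion, the distribution of $F^k(\lambda^2)$ under $\mathbb{P}^x$ does not depend on the starting point $x$, so we may work under $\mathbb{P}=\mathbb{P}^0$ throughout.

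The core calculation is for dyadic $\lambda=2^j$ with $\tilde k:=k+j\ge 1$. The rescaled process $\tilde B(t):=\lambda^{-1}B(\lambda^2 t)$ is a standard Brownian motion, and its hitting times at scale $2^{-\tilde k}=2^{-k}/\lambda$ satisfy $\tilde T^{\tilde k}_n=\lambda^{-2}T^k_n$. A direct check yields $U^k(\lambda^2 u,x)=\lambda\,\tilde U^{\tilde k}(u,x/\lambda)$, together with the classical scaling $\ell^x(\lambda^2 u)=\lambda\,\tilde\ell^{x/\lambda}(u)$ of local time. Substituting both into the definition of $J^k$ produces the exact identity
\[
J^k(\lambda^2) \;=\; \lambda\Bigl(1+\frac{j}{k}\Bigr)\tilde J^{\tilde k}(1),\qquad \lambda^2\le \bar T.
\]
Since $\tilde B\stackrel{d}{=}B$ and the prefactor is bounded by a constant $C_0=C_0(\bar T)$ throughout the range $\lambda\in[2^{1-k},\sqrt{\bar T}]$, we obtain $F^k(\lambda^2)/\lambda\le C_0\,J^{\tilde k}(1)$ in distribution. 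Koshnevisan's theorem applied with $M=1$ gives $J^{\tilde k}(1)\to 4\ell^*(1)$ almost surely as $\tilde k\to\infty$, which yields tightness of $\{J^{\tilde k}(1)\}_{\tilde k\ge 1}$ and disposes of this range of $\lambda$.

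The two extremal regimes are handled as follows. When $\lambda\ge\sqrt{\bar T}$, $F^k(\lambda^2)=J^k(\bar T)$ is $\lambda$-independent, and tightness of $\{J^k(\bar T)\}_{k\ge 1}$ (from Theorem~\ref{koshrr} applied with $M=\bar T$) together with $\mathbb{P}\{J^k(\bar T)>b\lambda\}\le \mathbb{P}\{J^k(\bar T)>b\sqrt{\bar T}\}$ settles the case. For non-dyadic $\lambda\in[2^{1-k},\sqrt{\bar T}]$, a sandwich $2^j\le\lambda<2^{j+1}$ combined with monotonicity of $t\mapsto J^k(t)$ reduces to the dyadic case at the cost of a factor of two. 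In the remaining regime $\lambda<2^{1-k}$, where $\tilde k$ would drop below one and the scaling breaks, I would decompose
\[
\mathbb{P}\{F^k(\lambda^2)>b\lambda\}\;\le\;\mathbb{P}\{N^k(\lambda^2)\ge 1\}+\mathbb{P}\{F^k(\lambda^2)>b\lambda,\;N^k(\lambda^2)=0\};
\]
on the second event $U^k\equiv 0$ on $[0,\lambda^2)$, forcing $F^k(\lambda^2)\le \ell^*(\lambda^2)^2/(2^{-k}\log 2^k)$, and the scaling $\ell^*(\lambda^2)\stackrel{d}{=}\lambda\,\ell^*(1)$ makes the corresponding probability small. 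The first term is controlled by the Gaussian tail estimate $\mathbb{P}\{T^k_1\le \lambda^2\}\lesssim \lambda 2^k\exp(-1/(2\lambda^2 2^{2k}))$, which is rendered arbitrarily small once one restricts to a small enough cutoff $\lambda\le c_0 2^{-k}$.

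The main obstacle is that the exact scaling identity works only for dyadic $\lambda$ and only while $\tilde k\ge 1$, forcing the argument to be assembled from three genuinely different regimes; producing a single threshold $b$ that works uniformly in $(k,\lambda)$ requires a coordinated choice of the small-$\lambda$ cutoff $c_0$ with the tightness thresholds coming from Koshnevisan's theorem.
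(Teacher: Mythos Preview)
Your route differs from the paper's and is in one respect more careful. The paper invokes a Brownian scaling identity (its display~(\ref{gd})) that keeps the upcrossing index $k$ fixed under the rescaling $B\mapsto\lambda^{-1/2}B(\lambda\,\cdot)$; but the dyadic lattice $2^{-k}\mathbb{Z}$ is preserved only under scaling by integer powers of $2$, so that identity is problematic for generic $\lambda>0$. You sidestep this by scaling $B\mapsto\lambda^{-1}B(\lambda^2\,\cdot)$ at dyadic $\lambda=2^j$, accepting a shift $k\mapsto\tilde k=k+j$ in exchange for the exact relation $J^k(\lambda^2)=\lambda(1+j/k)\,\tilde J^{\tilde k}(1)$, and then appealing to tightness of $\{J^{r}(1)\}_{r\ge 1}$ via Theorem~\ref{koshrr}. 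The large-$\lambda$ case and the dyadic sandwich are both sound, and the prefactor $1+j/k$ is indeed bounded on the stated range.

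There is, however, a real gap in your small-$\lambda$ regime. In the decomposition
\[
\mathbb{P}\{F^k(\lambda^2)>b\lambda\}\;\le\;\mathbb{P}\{N^k(\lambda^2)\ge 1\}+\mathbb{P}\{F^k(\lambda^2)>b\lambda,\ N^k(\lambda^2)=0\},
\]
the first term $\mathbb{P}\{T^k_1\le\lambda^2\}$ depends only on the product $\lambda 2^{k}$ and \emph{not on $b$}; it therefore cannot vanish as $b\to\infty$. Restricting to $\lambda\le c_0 2^{-k}$ makes it small, but then the intermediate window $c_0 2^{-k}<\lambda<2^{1-k}$ is left uncovered, and your closing remark does not say how to handle it. The fix is short: on that window use monotonicity $F^k(\lambda^2)\le F^k\bigl((2^{1-k})^2\bigr)$ together with your own scaling identity at the dyadic endpoint $\lambda'=2^{1-k}$ (so $\tilde k=1$), which gives $F^k\bigl((2^{1-k})^2\bigr)\stackrel{d}{=}2^{1-k}k^{-1}J^{1}(1)$ and hence
\[
\mathbb{P}\{F^k(\lambda^2)>b\lambda\}\;\le\;\mathbb{P}\bigl\{J^{1}(1)>b\lambda k\,2^{k-1}\bigr\}\;\le\;\mathbb{P}\bigl\{J^{1}(1)>bc_0/2\bigr\}\;\longrightarrow\;0.
\]
The ``coordinated choice'' then becomes explicit: given $\varepsilon>0$, first fix $c_0$ so that $\mathbb{P}\{T^k_1\le\lambda^2\}<\varepsilon$ for all $\lambda\le c_0 2^{-k}$, and afterwards take $b$ large (depending on $c_0$) to drive every remaining piece below $\varepsilon$.
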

\begin{proof}
Let us fix $k\ge 1$. By the very definition,

\begin{eqnarray}
\nonumber\sup_{y\in\mathbb{R}, \lambda > 0}\mathbb{P}^y\{F^k(\lambda^2) > b\lambda \} &\le& \mathop{\sup_{y\in \mathbb{R}}}_{0 <  \lambda^2 < \bar{T}}\mathbb{P}^y\{F^k(\lambda^2) > b\lambda \}+ \mathop{\sup_{y\in \mathbb{R}}}_{\lambda^2\ge \bar{T}}\mathbb{P}^y\{F^k(\bar{T}) > b\lambda \}\\
\nonumber& &\\
\label{fff}&\le& \mathop{\sup_{y\in \mathbb{R}}}_{0 < \lambda^2 < \bar{T}}\mathbb{P}^y\{F^k(\lambda^2) > b\lambda \} + \sup_{y\in \mathbb{R}}\mathbb{P}^y\{F^k(\bar{T}) > b\bar{T}^{1/2} \}.
\end{eqnarray}
By the very definition, if $G$ is a Borel subset of $\Omega$ and $y\in \mathbb{R}$ then $\mathbb{P}^y(G)=\mathbb{P}(G-y)$ where $G-y: =\{\omega\in \Omega; \omega(\cdot) + y \in G\}$. Now the $\mathbb{P}^y $ law of $F^k(\bar{T})$ does not dependent on $y\in \mathbb{R}$ because of the sup over all the initial conditions in $\mathbb{R}$. Then, the almost sure convergence~(\ref{koshr}) and the fact that $\ell^*(\bar{T}) < \infty~a.s$ yield

$$\sup_{y\in \mathbb{R}}\mathbb{P}^y\{F^k(\bar{T}) > b\bar{T}^{1/2} \} \le \mathbb{P}\{\sup_{r\ge 1}F^r(\bar{T}) > b \bar{T}^{1/2}\}\rightarrow 0$$
as $b\rightarrow \infty$ uniformly in $k\ge 1$. It remains to estimate the first term in~(\ref{fff}). We notice that for a given $\lambda >0$, the map $s\mapsto s\lambda$ is a bijection from $[0,\lambda]$ onto $[0,\lambda^2]$ and hence

\small
\begin{eqnarray}\label{bij}
\sup_{0< \lambda^2 < \bar{T}}\sup_{y\in \mathbb{R}}\mathbb{P}^y\{F^k(\lambda^2) > b\lambda\}&=& \sup_{0< \lambda^2 < \bar{T}}\sup_{y\in \mathbb{R}}\mathbb{P}^y
\Bigg\{\sup_{0\le s< \lambda}\sup_{x\in \mathbb{R}}\frac{ |\frac{1}{\sqrt{\lambda}}U^k(s\lambda,\sqrt{\lambda}x) - \frac{1}{\sqrt{\lambda}}\ell^{\sqrt{\lambda}x}(s\lambda)|^2}{2^{-k}log(2^k) } > b  \Bigg  \}.
\end{eqnarray}
\normalsize
Moreover, the Brownian motion scaling invariance yields

\small
\begin{equation}\label{gd}
\mathbb{P}^y\Bigg\{\sup_{0\le s< \lambda}\sup_{x\in \mathbb{R}}\frac{ |U^k(s,x) - \ell^x(s)|^2}{ 2^{-k}log(2^k)}  > b  \Bigg  \} = \mathbb{P}^y\Bigg\{\sup_{0\le s <  \lambda}\sup_{ x\in \mathbb{R}}\frac{ |\frac{1}{\sqrt{\lambda}}U^k(s\lambda,\sqrt{\lambda}x) - \frac{1}{\sqrt{\lambda}}\ell^{\sqrt{\lambda}x}(s\lambda)|^2}{ 2^{-k}log(2^k) } > b  \Bigg  \}.
\end{equation}
\normalsize
Summing up identities~(\ref{bij}) and (\ref{gd}), the fact that the $\mathbb{P}^y$ law of  $\sup_{0\le s< \lambda}\sup_{x\in \mathbb{R}}\frac{ |U^k(s,x) - \ell^x(s)|^2}{ 2^{-k}log(2^k)}$ does not depend on $y\in \mathbb{R}$ and using~(\ref{koshr}), we do have
\small
\begin{eqnarray*}
\nonumber\mathop{\sup_{y\in \mathbb{R}}}_{0< \lambda^2 < \bar{T}} \mathbb{P}^y\{F^k(\lambda^2) > b\lambda\}&\le& \mathbb{P}\Bigg\{\sup_{0\le s\le \bar{T}^{1/2}} \sup_{x\in \mathbb{R}} \frac{|U^k(s,x) - \ell^x(s)|^2}{2^{-k}log(2^k)} > b  \Bigg\}\\
\nonumber& &\\
&\le &\mathbb{P}\Bigg\{\sup_{r\ge 1}\sup_{0\le s\le \bar{T}^{1/2}} \sup_{x\in \mathbb{R}} \frac{|U^r(s,x) - \ell^x(s)|^2}{2^{-r}log(2^r)} > b  \Bigg\}\rightarrow 0
\end{eqnarray*}
\normalsize
as $b\rightarrow \infty$ uniformly in $k\ge 1$. This concludes the proof.
\end{proof}

In the sequel, let $\theta_t:\Omega\rightarrow \Omega$ be the shift operator defined by $\theta_s \omega:= \omega(\cdot+s);~\omega\in \Omega$. For a given adapted process $\{H(t); t\ge 0\}$, we recall that $H(\cdot)\circ \theta_s(\omega):=H(\omega, \cdot + s)$ for each $\omega\in \Omega$ and $s\ge 0$.



\begin{lemma}\label{sub}
For each $k\ge 1$, the functional $F^k$ is an $\mathbb{F}$-adapted non-decreasing functional with left-continuous paths which satisfies the following sub-additivity relation: For every $0\le s\le t < \infty$, we have

\begin{equation}\label{subad}
F^k(t)-F^k(s) \le F^k(t-s)\circ \theta(s)~a.s
\end{equation}

\end{lemma}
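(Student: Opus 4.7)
The fact that $F^k$ is $\mathbb{F}$-adapted and non-decreasing is immediate from its definition as a running supremum of an $\mathbb{F}$-adapted functional: for each $u$ and $x$, the quantity $|U^k(u, x) - \ell^x(u)|^2$ is $\mathcal{F}_u$-measurable, and the spatial supremum can be restricted to a countable dense set thanks to joint continuity of $\ell$ and piecewise constancy of $U^k(u, \cdot)$. Left-continuity follows because $J^k(t)$ is a supremum over the open half-interval $[0, t)$, so $J^k(t_n) \uparrow J^k(t)$ whenever $t_n \uparrow t$ with $t_n < t$; the truncation $t \mapsto t \wedge \bar{T}$ preserves this.

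For the subadditivity~(\ref{subad}), I would first dispose of the trivial cases $t \le s$ (left-hand side non-positive by monotonicity) and $s \ge \bar{T}$ (both sides zero). Assuming $0 \le s < t \wedge \bar{T}$ and writing
\[
G^k(u) := \sup_{x\in\mathbb{R}}\frac{|U^k(u,x) - \ell^x(u)|^2}{2^{-k}\log(2^k)},
\]
the supremum defining $F^k(t) = J^k(t\wedge \bar{T})$ splits at $u = s$ as $F^k(t) = \max\{F^k(s),\,\sup_{s \le u < t\wedge \bar{T}} G^k(u)\}$, and the claim reduces to the pathwise estimate $\sup_{s\le u < t\wedge \bar{T}} G^k(u) \le F^k(t-s)\circ\theta_s$. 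To establish this estimate I would combine two ingredients: (i) the additive decomposition of the Brownian local time under the time-shift, $\ell^x(u) - \ell^x(s) = \ell^x(u-s)\circ\theta_s$, which is an immediate consequence of the occupation-time definition of $\ell$; and (ii) an analogous comparison for the upcrossing process, relating the tail of the Knight random walk $A^k$ on $(s, \infty)$ to the random walk naturally embedded in the shifted Brownian motion $B(s + \cdot)$. After substituting $v = u - s$ and taking the supremum over $x \in \mathbb{R}$, which absorbs the lattice-translation ambiguity arising from the fact that $B(s)$ generally lies off the original $2^{-k}$-lattice, these two ingredients yield $G^k(s + v) \le G^k(v) \circ \theta_s$ for $v \in [0, (\bar{T} - s))$, and a change of variable produces the required bound.

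The main obstacle is precisely the comparison of upcrossing counts across the time-shift. The Knight walk anchored at $B(0)$ has made its last jump before $s$ at a stopping time $T^k_{N^k(s)} \le s$, so the Brownian motion is strictly inside a $2^{-k}$-window of $B(T^k_{N^k(s)})$ at time $s$, whereas the walk associated with $B(s + \cdot)$ starts a fresh $2^{-k}$-window at $B(s)$; the two walks live on translates of the same lattice and do not literally coincide on $(s, \infty)$. Carefully bookkeeping this overshoot at $s$ and exploiting the invariance of $\sup_{x\in\mathbb{R}}$ under the lattice shift by $B(s)$ is the crux of the proof; once that alignment is made explicit, the triangle inequality together with decompositions (i) and (ii) closes the argument.
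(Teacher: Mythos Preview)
Your plan follows the same skeleton as the paper --- split the running supremum at $s$ and then compare the tail piece $\sup_{s\le u<t\wedge\bar T}G^k(u)$ with $F^k(t-s)\circ\theta_s$ --- but you go further than the paper in recognising that the upcrossing process $U^k$ does \emph{not} transform cleanly under the path–shift $\theta_s$: the Knight walk built from $\theta_s\omega$ lives on the lattice $B(s)+2^{-k}\mathbb Z$ and is not the tail of the original walk. The paper's proof simply asserts the identity
\[
\sup_{s\le r<t}G^k(r)\;=\;\Bigl(\sup_{0\le r<t-s}G^k(r)\Bigr)\circ\theta_s
\]
on the grounds that ``$F^k$ only depends on the time variable, and does not depend on the space variable,'' without confronting this point at all; your outline is in that sense more honest.

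That said, the pointwise estimate you reduce to, $G^k(s+v)\le G^k(v)\circ\theta_s$, cannot hold. Take $v>0$ small. On the shifted path both the upcrossing count and the local time start at zero, so $G^k(v)\circ\theta_s\to 0$ as $v\downarrow 0$. On the original path, $U^k(s+v,x)\approx U^k(s,x)$ and $\ell^x(s+v)\approx\ell^x(s)$, whence $G^k(s+v)\approx G^k(s)$, which carries the full discrepancy $\sup_x|U^k(s,x)-\ell^x(s)|^2$ accumulated on $[0,s]$ and is typically strictly positive. No lattice realignment or triangle inequality removes this term: your ingredients (i) and (ii) control the \emph{increments} $U^k(s+v,\cdot)-U^k(s,\cdot)$ and $\ell^{\,\cdot}(s+v)-\ell^{\,\cdot}(s)$, not the values at time $s$. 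Hence the intermediate target $\sup_{s\le u<t\wedge\bar T}G^k(u)\le F^k(t-s)\circ\theta_s$ fails as a pathwise statement (and a fortiori the paper's claimed equality fails), so the argument as outlined does not yield~\eqref{subad}. Any genuine proof must instead exploit that the left–hand side of~\eqref{subad} is the \emph{increment} of a running maximum, $\max\bigl(0,\ \sup_{[s,t)}G^k-\sup_{[0,s)}G^k\bigr)$, which already subtracts off the accumulated error; working directly with $\sup_{[s,t)}G^k$ throws this cancellation away.
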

\begin{proof}
The fact that $F^k$ is adapted with left-continuous and non-decreasing paths is obvious. If $\bar{T} \le s < t$, then~(\ref{subad}) trivially holds. Now, if $0\le s< t \le \bar{T} $, we clearly have

$$\sup_{0\le r < t}\sup_{x\in \mathbb{R}} \frac{ |U^k(r,x) - \ell^x(r)|^2} { 2^{-k}log(2^k)}\le \sup_{0\le r < s}\sup_{x\in \mathbb{R}} \frac{ |U^k(r,x) - \ell^x(r)|^2} { 2^{-k}log(2^k)} + \sup_{s\le r < t}\sup_{x\in \mathbb{R}} \frac{ |U^k(r,x) - \ell^x(r)|^2} { 2^{-k}log(2^k)}.
$$
Observe that the functional $F^k$ only depends on the time variable, and does not depend on the space variable. Therefore, from the definition of the shift operator, we obtain:
$$\sup_{s\le r < t}\sup_{x\in \mathbb{R}} \frac{ |U^k(r,x) - \ell^x(r)|^2} { 2^{-k}log(2^k)} =\sup_{0\le r < t-s}\sup_{x\in \mathbb{R}} \frac{ |U^k(r,x) - \ell^x(r)|^2} { 2^{-k}log(2^k)}\circ\theta_s.$$
This concludes the proof.
\end{proof}
We are now able to prove Theorem~\ref{lprate}. The idea is to find a good-lambda inequality~(see~e.g~\cite{jacka},~\cite{bass}) for our functional $F^k$. For this purpose, we fix $k\ge 1$, $\eta>0$ and for a given $\lambda >0$, let us define $J_{\lambda}: =\inf\{t\ge 0; F^k(t) > \lambda\}$. We also fix $\beta >1$ and $\delta>0$. Since $F^k$ is left-continuous, then $F^k(J_{\lambda}-) = F^k(J_{\lambda})\le \lambda$~a.s. By using the sub-additive property of $F^k$ given in Lemma~\ref{sub}, the strong Markov property of the Brownian motion, the non-decreasing and left-continuous paths of $F^k$, we shall find a good-lambda inequality as follows

\small
\begin{eqnarray*}
\mathbb{P}\{F^k(\bar{T}) > \beta\lambda, (\bar{T})^{1/2}\le \delta \lambda \}&\le& \mathbb{P}\{F^k(\bar{T}) - F^k(J_{\lambda}) > (\beta-1)\lambda, (\bar{T})\le \delta^2 \lambda^2 \}\\
& &\\
&\le & \mathbb{P}\{F^k(J_{\lambda} + \delta^2\lambda^2) - F^k(J_{\lambda})\ge (\beta-1)\lambda, J_{\lambda} < \bar{T}\}\\
& &\\
&\le& \mathbb{P}\{F^k(\delta^2\lambda^2)\circ \theta_{J_{\lambda}}\ge (\beta-1)\lambda, J_{\lambda} < \bar{T}\}\\
& &\\
&=& \int_{\{J_{\lambda}< \bar{T}\}}\mathbb{P}\big[F^k(\delta^2\lambda^2)\circ \theta_{J_{\lambda}}\ge (\beta-1)\lambda|\mathcal{F}_{J_{\lambda}}\big]d\mathbb{P}\\
& &\\
&=& \int_{\{J_{\lambda} < \bar{T}\}}\mathbb{P}^{B(J_{\lambda})}\big[F^k(\delta^2\lambda^2)\ge (\beta-1)\lambda\big]d\mathbb{P}\\
& &\\
&\le& \sup_{x\in\mathbb{R}}\mathbb{P}^x\{F^k(\delta^2\lambda^2)\ge(\beta-1)\lambda   \}\mathbb{P}\{J_{\lambda} < \bar{T}\}\\
& &\\
&\le& \mathop{\sup_{x\in \mathbb{R}}}_{\eta >0} \mathbb{P}^x\Big\{F^k(\eta^2) > \frac{\beta-1}{2\delta}\eta   \Big\}\mathbb{P}\big\{F^k(\bar{T})>\lambda \big\}.
\end{eqnarray*}
\normalsize
Now, from Lemma~\ref{unifscal}, we shall take $\delta$ small enough in such way that
$$\mathop{\sup_{x\in \mathbb{R}}}_{\eta >0} \mathbb{P}^x\Big\{F^k(\eta^2) > \frac{\beta-1}{2\delta}\eta   \Big\}$$
is small uniformly in $k\ge 1$. Then, by applying Lemma 7.1 given in Burkh\"{o}lder~\cite{burk} on the moderate function $x\mapsto x^{1+\frac{\eta}{2}}$ ($x\ge 0$), we get an universal constant which only depends on $\eta$ such that

\begin{equation}\label{l}
\mathbb{E}|F^k(\bar{T})|^{1+\frac{\eta}{2}}\le C(\eta)(\bar{T})^{\frac{1}{2}+\frac{\eta}{4}}~\forall k\ge 1.
\end{equation}
\noindent Since $\bar{T}> T$ is arbitrary, then~(\ref{l}) concludes the proof of Theorem~\ref{lprate}.

\subsection{Proof of Corollary~\ref{variationr}}
Let us now give the proof of Corollary~\ref{variationr}. In the sequel, we fix $\delta >0, m\ge 1$ and for a given partition $\Pi=\{x_i\}_{i=0}^N$ of the interval $I_m$, let us define the following subset $\Lambda(\Pi,k): = \{x_i\in \Pi; (j_k(x_i)-j_k(x_{i-1}))2^{-k} >0 \}$. We notice that $\#\ \Lambda(\Pi,k)\le 22^{k+m}$ for every partition $\Pi$ of $I_m$. We readily see that

\begin{equation}\label{es1}
\sum_{x_i\in \Pi}|U^{k}(t,x_i)  - U^{k}(t,x_{i-1})|^{2+\delta}\le \sum_{x_i\in \Lambda(\Pi,k)}|U^{k}(t,x_i)  - U^{k}(t,x_{i-1})|^{2+\delta},
\end{equation}
for every partition $\Pi$ of $I_m$. By writing $|U^{k}(t,x_i)  - U^{k}(t,x_{i-1})| = |U^{k}(t,x_i) - \ell^{x_i}(t) + \ell^{x_i}(t) - \ell^{x_{i-1}}(t) + \ell^{x_{i-1}}(t) - U^{k}(t,x_{i-1})|;~x_i\in \Lambda(\Pi,k)$ and applying the standard inequality $|\alpha -\beta|^{2+\delta}\le 2^{1+\delta}\{|\alpha|^{2+\delta} + |\beta|^{2+\delta}\};~\alpha,\beta\in \mathbb{R}$, we get from~(\ref{es1})

\begin{eqnarray}
\nonumber \mathbb{E}\sup_{0\le t\le T}\|U^k(t)\|^{2+\delta}_{I_m;2+\delta} &\le& C\mathbb{E}\sup_{0\le t\le T}\|\ell^x(t)\|^{2+\delta}_{I_m;2+\delta} + \nonumber C\mathbb{E}\sup_{0\le t\le T}\sup_{\Pi}\sum_{x_i\in \Lambda(\Pi,k)}|U^k(t,x_i) - \ell^{x_i}(t)|^{2+\delta}\\
\nonumber& &\\
\label{l2}&+& C \mathbb{E}\sup_{0\le t\le T}\sup_{\Pi}\sum_{x_i\in \Lambda(\Pi,k)}|U^k(t,x_{i-1}) - \ell^{x_{i-1}}(t)|^{2+\delta},
\end{eqnarray}
for a constant $C$ which only depends on $\delta>0$. An inspection in the proof of Lemma 2.1 in Feng and Zao~\cite{feng} yields $\mathbb{E}\sup_{0\le t\le T}\|\ell^x(t)\|^{2+\delta}_{I_m;2+\delta}<\infty$. Now,

$$\sup_{0\le t\le T}\sup_{\Pi}\sum_{x_i\in \Lambda(\Pi,k)}|U^k(t,x_i) - \ell^{x_i}(t)|^{2+\delta}\le 22^{k+m} \mathop{\sup_{x\in I_m}}_{0\le t\le T}|U^k(t,x) - \ell^{x}(t)|^{2+\delta}~a.s~k\ge 1$$
and hence from Theorem~\ref{lprate}, we have

\begin{eqnarray*}
\mathbb{E}\sup_{0\le t\le T}\sup_{\Pi}\sum_{x_i\in \Lambda(\Pi,k)}|U^k(t,x_i) - \ell^{x_i}(t)|^{2+\delta}&\le& 22^{k+m}C(\delta)T^{\frac{1}{2}+\frac{\delta}{4}}(2^{-k}klog(2))^{1+\frac{\delta}{2}}\\
& &\\
&\le& 2^{m+1}log(2)^{1+\frac{\delta}{2}} T^{\frac{1}{2}+\frac{\delta}{4}}\sup_{r\ge 1} 2^{\frac{-r\delta}{2}}r^{1+\frac{\delta}{2}}\\
& &\\
&\le& C(\delta,m)T^{\frac{1}{2}+\frac{\delta}{4}} < \infty,
\end{eqnarray*}
for some constant  $c(\delta,m)$ which only depends on $m,\delta$. The other term in~(\ref{l2}) can be treated similarly. This concludes the proof of Corollary~\ref{variationr}.

\end{document}